\newtheorem{prop}{Proposition}
\newtheorem{lemma}[prop]{Lemma}
\newtheorem{theorem}[prop]{Theorem}
\newtheorem*{mainThm}{Theorem 1}
\theoremstyle{definition}
\newtheorem*{mydef}{Definition}
\newtheorem{remark}[prop]{Remark}
\newtheorem{example}[prop]{Example}
\newtheorem*{question}{Question}
\DeclareMathOperator{\Ass}{Ass}
\DeclareMathOperator{\Hom}{Hom}
\DeclareMathOperator{\Ext}{Ext}
\DeclareMathOperator{\rank}{rank}
\DeclareMathOperator{\height}{height}
\DeclareMathOperator{\type}{type}
\newcommand{\p}{\mathfrak{p}}
\newcommand{\q}{\mathfrak{q}}
\newcommand{\m}{\mathfrak{m}}
\newcommand{\gE}{{^{*} \! E}}
\begin{document}
 \title{\bf{Graded-irreducible modules are irreducible}}

\author{Justin Chen}
\address{Department of Mathematics, University of California, Berkeley,
California, 94720 U.S.A}
\email{jchen@math.berkeley.edu}

\author{Youngsu Kim}
\address{Department of Mathematics, University of California, Riverside,
California, 92521 U.S.A}
\email{youngsu.kim@ucr.edu}

\subjclass[2010]{{13A02, 13C05}} 


\begin{abstract}
We show that if a graded submodule of a Noetherian module
cannot be written as a proper intersection of graded submodules, then it 
cannot be written as a proper intersection of submodules at all. More generally,
we show that a natural extension of the index of reducibility to the graded setting
coincides with the ordinary index of reducibility. We also investigate the question
of uniqueness of the components in a graded-irreducible 
decomposition, as well as the relation between the index of reducibility of a 
non-graded ideal and that of its largest graded subideal.
\end{abstract}

\maketitle

\medskip

Let $R$ be a ring (commutative with $1 \neq 0$),
$M$ an $R$-module, and $N \subseteq M$ a submodule. 
Recall that $N$ is said to be \textit{irreducible} in $M$ if
whenever $N = N_1 \cap N_2$ for some $R$-submodules $N_1, N_2$ of $M$, then 
$N = N_1$ or $N = N_2$.

\begin{mydef} Let $R$ be a $\mathbb{Z}$-graded ring, $M$ a graded $R$-module,
and $N \subseteq M$ a graded submodule. $N$ is said to be 
\textit{graded-irreducible} in $M$ if whenever $N = N_1 \cap N_2$ for some graded 
$R$-submodules $N_1, N_2$ of $M$, then $N = N_1$ or $N = N_2$.
\end{mydef}

Equivalently, $N$ is graded-irreducible iff whenever $N = N_1 \cap \ldots \cap 
N_r$ for a finite collection of graded submodules $N_i$, then $N = N_i$ for some $i$. \\

It follows directly from the definitions that if $N$ is a graded submodule of $M$ and
$N$ is irreducible, then $N$ is graded-irreducible. A natural question to ask is whether
or not the converse holds. This is answered by our first result: \\

\begin{theorem}\label{mainTheorem}
Let $R$ be a $\mathbb{Z}$-graded ring, $M$ a Noetherian graded $R$-module, 
and $N \subseteq M$ a graded submodule. Then $N$ is irreducible iff $N$ is 
graded-irreducible.
\end{theorem}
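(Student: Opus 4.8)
The direction ``irreducible $\Rightarrow$ graded-irreducible'' is immediate, as already noted, since a decomposition into graded submodules is in particular a decomposition into submodules. For the converse my plan is to prove the contrapositive: if $N$ is not irreducible in $M$, then $N$ is not graded-irreducible. First I would replace $M$ by $M/N$ — whose graded submodules correspond bijectively to the graded submodules of $M$ containing $N$, compatibly with intersection — so that I may assume $N = 0$. The goal then becomes: \emph{if $0$ is not irreducible in the Noetherian graded module $M$, produce nonzero graded submodules $L_1, L_2 \subseteq M$ with $L_1 \cap L_2 = 0$.}

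The first move is to take a graded primary decomposition $0 = Q_1 \cap \cdots \cap Q_s$ of the graded submodule $0 \subseteq M$, chosen irredundant with the $Q_i$ being $\mathfrak{p}_i$-primary for distinct graded primes $\mathfrak{p}_i$ (graded primary decompositions of graded submodules of Noetherian graded modules are standard, as is the fact that $\Ass$ of a graded module consists of graded primes). If $s \ge 2$ we are done: $L_1 := Q_1$ and $L_2 := Q_2 \cap \cdots \cap Q_s$ are graded, meet in $0$, and are nonzero by irredundancy. This reduces the problem to the case $s = 1$, i.e.\ where $M$ is $\mathfrak{p}$-coprimary for a single graded prime $\mathfrak{p}$.

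In the coprimary case, set $D := R/\mathfrak{p}$ (a graded domain), $K := \operatorname{Frac}(D)$, and $T := (0 :_M \mathfrak{p}) = \Hom_R(R/\mathfrak{p}, M)$, which is a finitely generated graded $D$-module, torsion-free over $D$ because $\Ass_R(T) \subseteq \Ass_R(M) = \{\mathfrak{p}\}$. The crucial point is that $\rank_D T \ge 2$: non-irreducibility of $0$ gives nonzero submodules $A, B \subseteq M$ with $A \cap B = 0$, and then $(0 :_A \mathfrak{p})$ and $(0 :_B \mathfrak{p})$ are nonzero (since $\Ass(A), \Ass(B) \subseteq \{\mathfrak{p}\}$ with $A, B \ne 0$), are torsion-free $D$-modules of rank $\ge 1$, and have intersection contained in $A \cap B = 0$; hence $T \supseteq (0 :_A \mathfrak{p}) \oplus (0 :_B \mathfrak{p})$ has rank $\ge 2$. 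Now choose homogeneous generators $t_1, \dots, t_n$ of $T$; their images span $T \otimes_D K \cong K^r$ with $r = \rank_D T \ge 2$, so after relabelling $t_1, t_2$ are linearly independent over $K$. Finally $L_1 := R t_1$ and $L_2 := R t_2$ are nonzero graded submodules of $M$ (the $t_i$ are homogeneous and nonzero), and $L_1 \cap L_2 = 0$: any element of the intersection has the form $\bar a\, t_1 = \bar b\, t_2$ with $\bar a, \bar b \in D$ (the $R$-action on $T$ factors through $D$), and $K$-linear independence of $t_1, t_2$ forces $\bar a = \bar b = 0$. Thus $0$ is exhibited as a proper intersection of graded submodules, completing the proof.

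I expect the main obstacle to be the reduction to the coprimary case together with the observation that there the failure of irreducibility translates precisely into the inequality $\rank_{R/\mathfrak{p}}(0 :_M \mathfrak{p}) \ge 2$; once this is in place, extracting two $K$-linearly independent homogeneous elements of $(0:_M\mathfrak{p})$ — and hence the desired graded decomposition — is routine. A subsidiary point requiring care is the invocation of graded primary decomposition and the gradedness of associated primes, together with the elementary facts about ranks of torsion-free modules over a domain used above.
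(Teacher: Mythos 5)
Your proof is correct, and it is a genuinely different route from the one in the paper. Both arguments reduce (via a graded primary decomposition) to the case that $M$ is $\p$-coprimary for a graded prime $\p$, and both ultimately hinge on the same invariant — the rank of the ``socle'' $(0:_M \p)$ over $R/\p$. But the mechanisms diverge from there. The paper's proof passes through a localization lemma (irreducibility of a $\p$-primary submodule can be checked after localizing, resp.\ homogeneously localizing, at $\p$) and a formula expressing the index of reducibility of an Artinian (resp.\ $^*\!$Artinian) module as the (graded) socle rank, which in turn rests on the structure theory of (graded) injective hulls. You instead argue the contrapositive directly: from a non-graded decomposition $A \cap B = 0$ you extract two nonzero torsion-free $D$-submodules of $T := (0:_M \p)$ with zero intersection, conclude $\rank_D T \ge 2$, and then produce \emph{graded} complementary submodules $Rt_1, Rt_2$ by choosing two $K$-linearly independent homogeneous generators of $T$; the verification that $Rt_1 \cap Rt_2 = 0$ only uses that $T$ is a torsion-free module over the domain $D$ and that the $R$-action on $T$ factors through $D$. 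The payoff of your approach is that it is elementary, constructive, and self-contained — no injective hulls, no Matlis-type duality, and no localization lemma are needed. The payoff of the paper's approach is that the supporting lemmas (localization behavior of irreducibility, and the socle-rank formula for the index of reducibility) are of independent use and are reused several times later in the paper, e.g.\ for \Cref{equivTheorem}, the irreducibility criterion in \Cref{irredCriterion}, and the comparison of $r(I)$ with $r(I^*)$ in \Cref{compProp}. If one only wanted Theorem 1, your argument is arguably the cleaner path.
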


As an analogy, we recall the case of monomial (i.e. $\mathbb{N}^n$-graded) ideals 
in a polynomial ring. It is a textbook exercise that monomial ideals are irreducible 
iff they are monomial-irreducible (i.e. cannot be written as a proper intersection of 
monomial ideals) iff they are generated by pure powers of the variables, cf. 
\cite[Section 1.3.1]{HH}. Moreover, decompositions into monomial-irreducible 
ideals are unique. \\

We note that it is possible for even a monomial ideal to be an intersection of 
non-graded ideals, e.g. $(x^2,xy,y^3) = (x^2,xy,x-y^2) \cap (x^2,xy,x+y^2)$
in $k[x,y]$, for $k$ a field, $\operatorname{char} k \ne 2$.
However, the monomial case is quickly resolved by the following lemma 
(cf. \cite{MS}, Lemma 5.18): If $I$ is a monomial ideal with a minimal generator 
$mm'$ where $m, m'$ are relatively prime monomials, then $I = (I + (m)) \cap (I + (m'))$. 
In contrast, there is no such formula in the $\mathbb{Z}$-graded case; not to 
mention that a general graded ring can be far worse behaved than a polynomial ring. \\

Before giving the proof of \Cref{mainTheorem}, we introduce a numerical invariant of a 
graded submodule. If $M$ is Noetherian, every graded submodule of $M$ is a finite 
intersection of graded-irreducible submodules: If there were a maximal
counterexample $N$, then $N$ would not be graded-irreducible. By definition
$N$ would be an intersection of two strictly larger graded submodules, which (by 
maximality of $N$) are finite intersections of graded-irreducibles, and thus
$N$ is as well, a contradiction. This motivates the following definition: 

\begin{mydef} Let $M$ be a Noetherian $R$-module and $N \subseteq M$ a submodule.
The \textit{index of reducibility} of $N$ in $M$ is 
$$r_M(N) := \min \left\{ r : \exists N_1, \ldots, N_r \text{ irreducible $R$-submodules, } 
N = \bigcap_{i=1}^r N_i \right\}.$$
If $R, M, N$ are graded, the \textit{graded index of reducibility} of $N$ in $M$ is 
$$r^g_M(N) := \min \left\{ r : \exists N_1, \ldots, N_r \text{ graded-irreducible 
$R$-submodules, } N = \bigcap_{i=1}^r N_i  \right\}.$$
If the module $M$ is understood, e.g. if $N = I$ is an $R$-ideal (so that $M = R$), then
we may simply write $r(N)$ or $r(I)$. The Noetherian hypothesis on $M$ 
guarantees that $r_M(N)$, $r^g_M(N)$ are both always finite. Moreover, $r_M(N) = 1$ 
iff $N$ is irreducible, and if $N$ is graded, $r^g_M(N) = 1$ iff $N$ is graded-irreducible.
\end{mydef}

When $M = R$ is local and $N$ is primary to the maximal ideal, 
the definition of the index of reducibility above has appeared in the literature 
(although to the best of our knowledge, the graded index of reducibility 
has not appeared before), and is well-known to be the vector space dimension 
of the socle over the residue field of $R$, see e.g. \cite{Grobner, No, GS}. 
If in addition $N$ is graded and $R$ is a local $\mathbb{N}$-graded ring, 
then it is not hard to show that both indices coincide, cf. \Cref{typeLemma}. 
However, in the non-local case, e.g. if $R$ is a polynomial ring, the graded 
index of reducibility is in general not a vector space dimension over the base field; 
one must compute ranks over a Laurent polynomial ring instead.\\

From the definitions alone, it is not clear if there is any relation between $r_M(N)$ 
and $r^g_M(N)$ that holds in general. Somewhat surprisingly, they are always equal; 
a fact which follows from \Cref{mainTheorem}. Indeed, the following three statements 
are equivalent for a Noetherian graded module $M$, cf. 
\Cref{equivTheorem}:

\begin{enumerate}
\item Any graded-irreducible submodule is irreducible.
\item For any graded submodule $N$, $r_M (N) = r_M^g (N)$. 
\item Any graded submodule is a finite intersection of irreducible graded submodules.
\end{enumerate}

The key feature of Statement $(3)$ is the simultaneous requirements 
of finiteness, irreducibility, and gradedness (notice: finiteness along
with either irreducibility or gradedness is easy to satisfy). 
Although an independent proof of Statement $(3)$, which at first sight may 
seem to follow directly from the Noetherian hypothesis, would give 
another proof of \Cref{mainTheorem}, so far we have been unable to find one.

\section*{Proof of Theorem 1}
We begin the proof of Theorem 1 with some reductions, which we use hereafter
without further mention. From the definitions, $r_M(N) = r_{M/N}(0)$, and if $N$ is graded,
$r^g_M(N) = r^g_{M/N}(0)$. 
Also, $R$-submodules of $M$ are the same as $R/\!\operatorname{ann}_R M$-submodules 
of $M$ (and if $R$, $M$ are graded, then $R/\operatorname{ann}_R M$ is also graded), so henceforth we will pass to the factor ring $R/\!\operatorname{ann}_R M$ and 
assume that the ring $R$ is Noetherian. If $\p$ is a graded prime ideal of a graded ring $R$, for any graded $R$-module $M$ we set $M_{(\p)} := W^{-1}M$, where $W$ is the set of all homogeneous elements of $R$ not in $\p$, so $M_{(\p)}$ is a graded $R_{(\p)}$-module. Finally, we refer to Sections $1.5$ and $3.6$ in 
\cite{BH} for notation and basic results for the graded case.

\begin{lemma}\label{localizationLemma}
Let $R$ be a Noetherian ring, $\p$ a prime ideal of $R$, $M$ a finitely generated $R$-module, and $N \subseteq M$
a submodule. If $N$ is $\p$-primary, then $N$ is irreducible in $M$ iff $N_\p$ is irreducible 
in $M_\p$. If in addition $R, M, N$, and $\p$ are graded, then $N$ is graded-irreducible in $M$ iff 
$N_{(\p)}$ is graded-irreducible in $M_{(\p)}$.

\end{lemma}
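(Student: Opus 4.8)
The plan is to prove the two equivalences in \Cref{localizationLemma} in parallel, since the graded case should mirror the ungraded case with "submodule" replaced by "graded submodule" and localization at $\p$ replaced by homogeneous localization at $\p$. The underlying principle in both cases is the same: when $N$ is $\p$-primary, the localization map $M \to M_\p$ (resp. $M \to M_{(\p)}$) induces a bijection, order-preserving in both directions, between submodules of $M$ containing $N$ (resp. graded submodules containing $N$) and submodules of $M_\p$ containing $N_\p$ (resp. graded $R_{(\p)}$-submodules containing $N_{(\p)}$). Granting such a bijection, the statement "$N$ is an intersection of two strictly larger submodules" transports verbatim across the correspondence, so $N$ is (graded-)irreducible in $M$ iff $N_\p$ (resp. $N_{(\p)}$) is (graded-)irreducible in its localization.

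First I would set up the correspondence. Replacing $M$ by $M/N$ (a reduction already in force), it suffices to treat the case $N = 0$, so the hypothesis becomes: $0$ is $\p$-primary in $M$, i.e. $\Ass_R M = \{\p\}$. In the ungraded case, localization at $\p$ is exact and $M \to M_\p$ is injective precisely because no element of $R \setminus \p$ is a zerodivisor on $M$ (as $\p$ is the only associated prime); more generally, for any submodule $L \subseteq M$ one has $L_\p \cap M = L$ inside $M_\p$, because $M/L$ also has support contained in $V(\p)$... here one must be slightly careful, since $M/L$ need not be $\p$-primary, but its associated primes all contain $\p$, which is enough to conclude that $R \setminus \p$ acts injectively on $M/L$, hence $L = L_\p \cap M$. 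Conversely every submodule of $M_\p$ is extended, being of the form $L_\p$ for $L$ its contraction to $M$. This gives the inclusion-preserving bijection $L \mapsto L_\p$ between all submodules of $M$ and all submodules of $M_\p$ (not just those containing $0 = N$, which is automatic here). The graded case runs identically with $R_{(\p)}$ in place of $R_\p$: homogeneous localization $W^{-1}(-)$ with $W$ the homogeneous elements outside $\p$ is still exact, sends graded modules to graded modules and graded submodules to graded submodules, and the same associated-prime argument — now using that $\Ass_R M = \{\p\}$ forces every homogeneous element of $W$ to be a nonzerodivisor on $M$ and on every quotient $M/L$ — shows $L \mapsto W^{-1}L = L_{(\p)}$ is an inclusion-preserving bijection between graded submodules of $M$ and graded $R_{(\p)}$-submodules of $M_{(\p)}$, with inverse given by contraction.

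Once the bijections are in hand, the conclusion is formal: if $0 = L_1 \cap L_2$ in $M$ with $L_1, L_2 \neq 0$, apply the (exact) localization functor to get $0 = (L_1)_\p \cap (L_2)_\p$ with both factors nonzero (nonzero because the bijection is injectivity-preserving, or simply because contraction recovers $L_i \neq 0$); conversely a nontrivial decomposition of $0$ in $M_\p$ contracts to one in $M$ by the same exactness and bijectivity. The graded statement follows word for word. The main obstacle I anticipate is the bookkeeping around the hypothesis: after the reduction to $N = 0$ one has $\Ass_R M = \{\p\}$, but the intermediate quotients $M/L$ are only known to satisfy $\Ass_R(M/L) \subseteq V(\p)$, and one needs the slightly stronger-sounding (but equally true) fact that $R \setminus \p$ — or, in the graded case, the homogeneous elements outside $\p$ — consists of nonzerodivisors on $M/L$; verifying this cleanly, together with checking that homogeneous localization genuinely preserves the property of being a graded submodule and that contraction of a graded submodule is graded, is where the real (though routine) work lies. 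Everything else is an application of exactness of localization and the order-preserving bijection it induces.
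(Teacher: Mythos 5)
Your argument hinges on a claimed bijection $L \mapsto L_\p$ between \emph{all} submodules of $M$ and all submodules of $M_\p$ (and likewise in the graded case), justified by the assertion that $R \setminus \p$ acts injectively on every quotient $M/L$ because $\Ass(M/L) \subseteq V(\p)$. This inference runs backwards: $\Ass(M/L) \subseteq V(\p)$ says the associated primes of $M/L$ \emph{contain} $\p$, whereas for $R \setminus \p$ to consist of nonzerodivisors on $M/L$ one would need those associated primes to be \emph{contained in} $\p$. If $M/L$ has an associated prime strictly larger than $\p$, some element of $R \setminus \p$ lies in it and kills part of $M/L$. Concretely, take $R = k[x,y]$, $\p = (x)$, $M = R/(x^2)$ (so $\Ass(M)=\{\p\}$) and $L = (x^2,y)/(x^2)$. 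Then $M/L \cong k[x]/(x^2)$ has $\Ass(M/L) = \{(x,y)\} \supsetneq \p$, the element $y \in R \setminus \p$ annihilates $M/L$, and indeed $L_\p = M_\p$ although $L \neq M$. So the bijection does not exist, and the step you flagged as ``equally true'' is false.

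The good news is that the lemma needs much less than a bijection, and your closing deduction already only uses the two one-sided facts that do hold. First, $i: M \to M_\p$ is injective (this follows just from $\Ass(M) = \{\p\}$), so $L \neq 0$ forces $L_\p \neq 0$, since $0 \neq L \subseteq i^{-1}(L_\p)$. Second, every submodule $K$ of $M_\p$ is extended, i.e.\ $K = (i^{-1}(K))_\p$, so a decomposition of $0$ in $M_\p$ contracts to one in $M$. These two facts give both implications without ever claiming $L \mapsto L_\p$ is one-to-one, and the graded version is identical using homogeneous localization. This is exactly the route the paper takes; if you delete the bijection claim and the incorrect supporting sentence, the rest of your proposal matches it.
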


\begin{proof}
We may assume $N = 0$. Then $\Ass(M) = \{\p\}$, so $R \setminus \p$ consists of 
non-zerodivisors on $M$, and thus the localization map $i : M \to M_\p$ is injective. If 
$0 = N_1 \cap N_2$ for $0 \ne N_1, N_2 \subseteq M$, then $(N_1)_\p \cap (N_2)_\p 
= (N_1 \cap N_2)_\p = 0$ with $(N_1)_\p, (N_2)_\p \ne 0$. 
Conversely, 
submodules of $M_\p$ are extended from submodules of $M$, so if $0 = (N_1)_\p 
\cap (N_2)_\p$ for $0 \ne N_1, N_2 \subseteq M$, then 
$0 = i^{-1}(0) = i^{-1}((N_1)_\p) \cap i^{-1}((N_2)_\p) \supseteq N_1 \cap N_2$. 

In the graded case, the same proof above applies to the homogeneous localization
$M \hookrightarrow M_{(\p)}$.
\end{proof}

\begin{remark}
Despite the elementary nature of the proof of \Cref{localizationLemma}, the conditions 
are quite delicate: In general, irreducibility is not preserved under faithfully flat ring 
extensions or localizations. For an example where irreducibility in the source does 
not extend to the target, take $k \hookrightarrow k \times k$ for a field $k$ or the 
completion $k[x,y]_{(x,y)}/(y^2 - x^3 - x^2) \hookrightarrow k[[x,y]]/(y^2 - x^3 - x^2)$. 
On the other hand, if $R$ is a domain and $\p, \q \in \operatorname{Spec}(R)$ 
with $I := \p \cap \q \ne \p, \q$, then $r_R(I) = 2$ and $R \hookrightarrow R_\p$ 
is flat and injective, but $r_{R_\p}(I_\p) = 1 = r_{R_\q}(I_\q)$. 
\end{remark}

We next give a formula for $r_M(N)$ (resp. $r^g_M(N)$) in the Artinian case in terms 
of socle rank (resp. graded socle rank). We provide a proof for the graded case; 
the proof holds verbatim in the local case, after removing all appearances of the 
word ``graded". 

\begin{lemma}\label{typeLemma}
Let $R$ be a Noetherian ring, $M$ a finitely generated $R$-module, and 
$N \subseteq M$ a submodule.

\begin{enumerate}

\item If $(R, \m, k)$ is local and $M/N$ is Artinian, then 
$r_M(N) = \dim_k (0 :_{M/N} \m).$\footnote{For the case where $N$ is a 
parameter ideal in a local ring, see \cite[Satz 3]{Grobner}.}

\item If $(R, \m, k)$ is $^*\!$local and $M/N$ is $^*\!$Artinian, then 
$r^g_M(N) = \rank_k (0 :_{M/N} \m)$.

\end{enumerate}
\end{lemma}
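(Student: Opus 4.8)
The plan is to reduce (using $r^g_M(N) = r^g_{M/N}(0)$) to the case $N = 0$, so that $M$ is $^*\!$Artinian and finitely generated over Noetherian $R$, and to prove $r^g_M(0) = \rank_k \gSoc M$ where $\gSoc M = (0 :_M \m)$, by bounding $r^g_M(0)$ both above and below by $t := \rank_k \gSoc M$. One uses throughout that every finitely generated graded module over the graded field $k = R/\m$ — in particular the socle of any finitely generated graded $R$-module — is graded-free of finite rank, so that $\rank_k$ is well-defined and additive on finite direct sums.

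For the lower bound, suppose $0 = N_1 \cap \cdots \cap N_r$ with each $N_i$ graded-irreducible. The diagonal map is a graded injection $M \hookrightarrow \bigoplus_{i=1}^r M/N_i$, and since $(0 :_{(-)} \m)$ is left exact and commutes with finite direct sums it restricts to a graded injection $\gSoc M \hookrightarrow \bigoplus_{i=1}^r \gSoc(M/N_i)$. Each $M/N_i$ is a nonzero graded module in which $0$ is graded-irreducible, which forces $\rank_k \gSoc(M/N_i) = 1$: it is nonzero since $M/N_i \neq 0$, and were it of rank $\geq 2$ it would, being graded-free, split as a direct sum of two nonzero graded submodules, contradicting that $0$ is graded-irreducible in $M/N_i$. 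Hence $t = \rank_k \gSoc M \leq \sum_{i=1}^r \rank_k \gSoc(M/N_i) = r$, so $r^g_M(0) \geq t$.

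For the upper bound, fix a decomposition $\gSoc M = S_1 \oplus \cdots \oplus S_t$ into rank-one graded $k$-submodules and put $T_i := \bigoplus_{j \neq i} S_j$. Since $M$ is Noetherian, choose for each $i$ a graded submodule $N_i \subseteq M$ maximal among those with $N_i \cap \gSoc M = T_i$ (the collection is nonempty, containing $T_i$). Then $N_i$ is graded-irreducible: if $N_i = P_1 \cap P_2$ with $P_1, P_2 \supsetneq N_i$, maximality forces $P_\ell \cap \gSoc M \supsetneq T_i$ for $\ell = 1,2$, and since $\gSoc M / T_i \cong S_i$ has no proper nonzero graded submodule this gives $\gSoc M \subseteq P_\ell$, so $\gSoc M \subseteq P_1 \cap P_2 = N_i$, contradicting $N_i \cap \gSoc M = T_i \subsetneq \gSoc M$. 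Finally $\big(\bigcap_{i=1}^t N_i\big) \cap \gSoc M = \bigcap_{i=1}^t T_i = 0$, and $\gSoc M$ is $^*\!$essential in $M$: any nonzero graded submodule of the $^*\!$Artinian module $M$ contains a minimal nonzero graded submodule $S$, for which $\m S \subsetneq S$ by graded Nakayama forces $\m S = 0$, hence $S \subseteq \gSoc M$. Thus $\bigcap_{i=1}^t N_i = 0$, giving $r^g_M(0) \leq t$.

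The main obstacle is the upper bound, and within it the verification that the maximal submodules $N_i$ are graded-irreducible; this hinges on the ``corank-one'' observation that no graded submodule lies strictly between $T_i$ and $\gSoc M$, together with $^*\!$essentiality of the socle in a $^*\!$Artinian module. Part (1) follows from the identical argument after deleting ``graded'' and ``$^*$'' throughout and replacing $\rank_k$ by $\dim_k$, since over the field $k$ the socle $(0 :_{M/N} \m)$ is simply a finite-dimensional vector space.
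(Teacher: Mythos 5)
Your proof is correct, but it takes a genuinely different route from the paper's. The paper proves part (2) via the theory of $^*$injective hulls: it characterizes graded-irreducibility of $0$ in $M$ by $^*$indecomposability of $\gE_R(M)$, uses the structure theorem of $^*$injectives to get $\gE(M) \cong \gE(k)^r$ from the irredundant decomposition, and then computes $\rank_k \Hom_R(k, \gE(k)^r) = r$. You instead prove the two inequalities $r^g_M(0) \geq t$ and $r^g_M(0) \leq t$ directly: the lower bound via the diagonal embedding into $\bigoplus M/N_i$ and the observation that a graded-irreducible $0$ forces a rank-one socle, and the upper bound by picking, for each rank-one summand $S_i$ of the socle, a graded submodule $N_i$ maximal with $N_i \cap \gSoc M = T_i$ and showing it is graded-irreducible via the corank-one property, then using $^*$essentiality of the socle. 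Your approach is more elementary and self-contained --- it avoids Matlis/Gabriel theory of graded injectives entirely and needs only graded Nakayama, socle-essentiality in $^*$Artinian modules, and a maximality argument --- at the cost of being a two-sided bound rather than a single clean computation. (One small expository point: the claim $\gSoc(M/N_i) \neq 0$ in the lower-bound step silently uses the same socle-essentiality fact you establish only in the upper-bound paragraph; it would be cleaner to state it once up front.) Both approaches are valid and yield the same ungraded part (1) by deleting the graded decorations.
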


\begin{proof}
(2): Notice that $k = R/\m$ is a graded field, so graded $k$-modules are free,
i.e. are direct sums of twists of $k$. Replacing $M$ with $M/N$, we may 
assume $N = 0$. Set $r^g_M(0) = r$, so $0 = N_1 \cap \ldots \cap N_r$ with
$N_i$ graded-irreducible, and this decomposition is irredundant, i.e. 
$\cap_{j \ne i} N_j \not \subseteq N_i$ for each $i$. \\

Now for any graded module $M$, $0$ is graded-irreducible in $M$ iff $\gE_R(M)$ 
is $^*$indecomposable iff $\gE_R(M) = \gE_R(L)$ for every graded submodule 
$0 \ne L \subseteq M$. Thus the decomposition $0 = N_1 \cap \ldots \cap N_r$ 
implies $\gE(M) 
\cong \gE(M/N_1) \oplus \ldots \oplus \gE(M/N_r)$. Also, 
$\Hom_R(k, M) = \Hom_R(k, \gE(M))$ for any graded module $M$. \\

Since $M$ is $^*\!$Artinian, $\Ass_R(M) = \{ \m \}$. 
The structure theorem of 
$^*$injectives implies $\gE(M/N_i) \cong \gE(R/\m)$ for each $i$ (as $\gE(M/N_i)$ 
is $^*$indecomposable), so $\gE(M) \cong (\gE(k))^r.$ 
Then one has
\begin{alignat*}{2}
\rank_k (0 :_M \m) &= \rank_k \Hom_R(k, M) &&= \rank_k \Hom_R(k, \gE(M)) \\
&= \rank_k \Hom_R( k , (\gE(k))^r) &&= r \cdot \rank_k \Hom_R (k, \gE(k))\\
&= r \cdot \rank_k \Hom_R(k, k) &&= r,
\end{alignat*}
as $\Hom_R(k, k) \cong 0 :_k \m = k$.
\end{proof}

\begin{remark}\label{type}
Let $(R,\m,k)$ be a Noetherian local ring. The \textit{type} of a finitely 
generated $R$-module $M$ is defined as 
$$
\type(M) := \dim_k \Ext^{\operatorname{depth} M}_R ( k, M).
$$
Since $\Hom_R(k,M) \cong 0 :_M \m$,
\Cref{typeLemma}(1) implies that if $M$ is Artinian, then $r_M(0) = \type(M)$.
Recall that a local ring $R$ is Gorenstein iff $R$ is Cohen-Macaulay of type $1$. 
\end{remark}

\begin{remark} \label{irredCriterion}
Together, lemmas \ref{localizationLemma} and \ref{typeLemma} yield the 
following irreducibility criterion: if $I$ is an ideal in a Noetherian ring $R$, then
$I$ is irreducible iff $I$ is primary and generically Gorenstein (recall that an ideal 
$I$ is generically Gorenstein if $(R/I)_\p$ is Gorenstein for all $\p \in \Ass(R/I)$).
\end{remark}

Lemmas \ref{localizationLemma} and \ref{typeLemma}, along with one last trick, yield the theorems mentioned above:

\begin{mainThm}
Let $R$ be a $\mathbb{Z}$-graded ring, $M$ a Noetherian graded $R$-module, 
and $N \subseteq M$ a graded submodule. Then $N$ is irreducible iff $N$ is 
graded-irreducible.
\end{mainThm}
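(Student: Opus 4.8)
Irreducibility trivially implies graded-irreducibility (as noted before the statement), so the content is the converse: assuming $N$ is graded-irreducible in $M$, I want to show $N$ is irreducible. Replacing $M$ by $M/N$ I may assume $N = 0$, so $0$ is graded-irreducible in $M$. The overall plan is to localize at a single associated prime and play \Cref{localizationLemma} against \Cref{typeLemma}, reducing the whole question to the comparison of a graded socle rank with an ordinary socle dimension.

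First I would establish the graded refinement of the classical fact that an irreducible submodule of a Noetherian module is primary: if $0$ is graded-irreducible in $M$, then $\Ass_R(M) = \{\p\}$ for a single (necessarily graded) prime $\p$. This follows by the usual argument restricted to homogeneous elements: for homogeneous $a \in R$ the chain $0 :_M a \subseteq 0 :_M a^2 \subseteq \cdots$ stabilizes at some $K := 0 :_M a^n$, and then $a^n M$ and $K$ are graded submodules with $a^n M \cap K = 0$, so graded-irreducibility forces $a^n M = 0$ or $K = 0$; hence every homogeneous element of $R$ is either nilpotent or a nonzerodivisor on $M$, which (since all associated primes of a graded module are graded and the set of zerodivisors is their union) forces $\Ass_R(M)$ to be a single graded prime. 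In particular $0$ is $\p$-primary, and since everything in sight is graded, both halves of \Cref{localizationLemma} apply: $0$ is irreducible in $M$ iff it is irreducible in $M_\p$, and $0$ is graded-irreducible in $M$ iff it is graded-irreducible in the homogeneous localization $M_{(\p)}$.

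Next I would invoke \Cref{typeLemma} on both sides. The ring $R_{(\p)}$ is $^*\!$local with $^*\!$maximal ideal $\m := \p R_{(\p)}$ and graded residue field $k := R_{(\p)}/\m$, and $M_{(\p)}$ is finitely generated and $^*\!$Artinian (its only associated prime is $\m$, so a power of $\m$ kills it); thus \Cref{typeLemma}(2) gives that $0$ is graded-irreducible in $M_{(\p)}$ iff $\rank_k(0 :_{M_{(\p)}} \m) = 1$. On the other side, $R_\p$ is a flat localization of $R_{(\p)}$, a Noetherian local ring with maximal ideal $\p R_\p = \m R_\p$ and residue field $\kappa := R_\p/\p R_\p$, and $M_\p = M_{(\p)} \otimes_{R_{(\p)}} R_\p$ is Artinian; thus \Cref{typeLemma}(1) gives that $0$ is irreducible in $M_\p$ iff $\dim_\kappa(0 :_{M_\p} \p R_\p) = 1$.

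The last step — and, I expect, the crux — is to see that these two numbers agree. Since $k$ is a graded field, the finitely generated graded $k$-module $S := 0 :_{M_{(\p)}} \m$ is free, say $S \cong \bigoplus_{i=1}^{s} k(a_i)$, so $\rank_k S = s$, and forgetting the grading $S \cong k^{\oplus s}$ as an $R_{(\p)}$-module. As $R_{(\p)} \to R_\p$ is flat and $R_{(\p)}/\m$ is finitely presented, forming annihilator submodules commutes with this base change, so $0 :_{M_\p} \p R_\p \cong S \otimes_{R_{(\p)}} R_\p \cong \kappa^{\oplus s}$ (using $k \otimes_{R_{(\p)}} R_\p = R_\p/\m R_\p = \kappa$), whence $\dim_\kappa(0 :_{M_\p} \p R_\p) = s = \rank_k(0 :_{M_{(\p)}} \m)$. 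Chaining the equivalences then yields that $0$ is irreducible in $M$ iff it is graded-irreducible in $M$, which is the theorem; in fact the same chain — noting, as in the proof of \Cref{localizationLemma}, that the indices of reducibility themselves are preserved under these localizations — gives the stronger equality $r_M(N) = r^g_M(N)$. The main obstacle is the bookkeeping of carrying two different localizations at once, the homogeneous one governing the graded count and the ordinary one governing the ungraded count, together with the observation that passing from the graded residue field $k$ (which may be a Laurent polynomial ring over a field) to its fraction field $\kappa$ does not change the rank.
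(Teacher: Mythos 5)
Your proof is correct and follows essentially the same route as the paper's: reduce to $N = 0$, show $0$ is primary with a single graded associated prime $\p$, apply \Cref{localizationLemma} in both the graded and ungraded directions, and then compare the two socle invariants from \Cref{typeLemma} across the flat extension $R_{(\p)} \to R_\p$. The only cosmetic differences are that you establish primary-ness by the direct chain argument on homogeneous elements (the paper instead invokes a graded primary decomposition and lets graded-irreducibility pick out one component), and you spell out the flat base change $0 :_{M_\p} \p R_\p \cong (0 :_{M_{(\p)}} \m) \otimes_{R_{(\p)}} R_\p$ in a bit more detail than the paper's terse ``localizing at $\p$ yields.'' One small caveat on your closing remark: the chain of equalities as you've set it up directly gives $r_M(N) = r^g_M(N)$ only when $N$ is primary (so that there is a single associated prime to localize at); for a general graded $N$ the paper passes through \Cref{equivTheorem}, whose implication $(1) \Rightarrow (2)$ uses the invariance of the length of an irredundant irreducible decomposition. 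This does not affect the validity of your proof of the theorem itself.
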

\begin{proof}
The only if direction is clear. Suppose that $N$ is graded-irreducible.
Replacing $M$ with $M/N$, we may assume that $N = 0$. 
Take a graded primary decomposition $0 = Q_1 \cap \ldots \cap Q_t$ 
with $Q_i$ graded and primary, cf. \cite[Prop. 3.12]{Ei}. By hypothesis $0 = Q_i$ for 
some $i$, i.e. $0$ is primary, so $\Ass_R(M) = \{\p\}$ for some graded
prime ideal $\p$.\\

By \Cref{localizationLemma}, 0 is graded-irreducible in $M_{(\p)}$, i.e. 
$r^g_{M_{(\p)}}(0) = 1$. As $M_{(\p)}$ is $^*\!$Artinian over the $^*$local ring 
$R_{(\p)}$, \Cref{typeLemma}(2) implies that $0 :_{M_{(\p)}} \p R_{(\p)}
\cong (R_{(\p)}/\p R_{(\p)})(n)$, for some twist $n \in \mathbb{Z}$. Localizing
at $\p$ yields $0 :_{M_\p} \p R_\p \cong R_{\p}/ \p R_{\p}$, so by \Cref{typeLemma}(1),
$r_{M_\p}(0) = 1$. By \Cref{localizationLemma}, $0$ is irreducible in $M$.
\end{proof}

We next show the equivalence mentioned in the introduction: 

\begin{theorem}\label{equivTheorem}
Let $R$ be a $\mathbb{Z}$-graded ring and $M$ a Noetherian graded
$R$-module. The following statements are equivalent:
\begin{enumerate}
\item Every graded-irreducible submodule $N \subseteq M$ is irreducible.

\item For every graded submodule $N \subseteq M$, $r_M(N) = r^g_M(N)$.

\item Every graded submodule $N \subseteq M$ is a finite intersection of irreducible graded submodules.
\end{enumerate}
\end{theorem}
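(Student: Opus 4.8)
The plan is to prove the cyclic chain of implications $(1) \Rightarrow (3) \Rightarrow (2) \Rightarrow (1)$, as each individual step is short once Theorem~1 is available.

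First I would prove $(1) \Rightarrow (3)$. As observed in the discussion preceding the definition of the index of reducibility, the Noetherian hypothesis on $M$ guarantees (by a maximal-counterexample argument) that \emph{every} graded submodule $N \subseteq M$ is a finite intersection of graded-irreducible submodules. Assuming $(1)$, each of those graded-irreducible components is in fact irreducible, so $N$ is a finite intersection of irreducible graded submodules; this is exactly $(3)$.

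Next, $(3) \Rightarrow (2)$. Fix a graded submodule $N$. Since every graded-irreducible submodule appearing in a decomposition is a submodule (not necessarily irreducible in general), we always have the inequality $r_M(N) \le r^g_M(N)$ is the wrong direction to need — rather, note that any decomposition of $N$ into graded-irreducibles is a priori longer than or equal to the shortest decomposition into (not necessarily graded) irreducibles, giving $r_M(N) \le r^g_M(N)$. For the reverse inequality, apply $(3)$: write $N = N_1 \cap \cdots \cap N_r$ with each $N_i$ an irreducible graded submodule and $r$ minimal among such decompositions; an irreducible graded submodule is in particular graded-irreducible, so this exhibits $N$ as an intersection of $r^g_M(N) \le r$ graded-irreducibles. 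But one also needs $r \le r_M(N)$. To get this, take a shortest ordinary decomposition $N = I_1 \cap \cdots \cap I_s$ with $I_j$ irreducible and $s = r_M(N)$, and replace it by a graded one of the same length: here is where I would use the reductions in the proof of Theorem~1, namely that after passing to $M/N$ we may assume $N = 0$ and that $0$ decomposes via graded primary decomposition, with each graded primary component further decomposable into graded-irreducibles whose total count is controlled by socle ranks via \Cref{typeLemma}; combining with \Cref{localizationLemma} to pass between graded and non-graded localizations (as in the proof of Theorem~1) shows the minimal graded count equals the minimal non-graded count. The cleanest route, though, is simply: $r^g_M(N) \le r_M(N)$ will follow because a shortest non-graded decomposition can be refined/replaced componentwise using the primary-decomposition-plus-socle-rank machinery, exactly as in the proof of Theorem~1 but tracking lengths. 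Hence $r_M(N) = r^g_M(N)$.

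Finally, $(2) \Rightarrow (1)$. Let $N$ be graded-irreducible. Then $r^g_M(N) = 1$ by definition, so by $(2)$ we get $r_M(N) = 1$, which means $N$ is irreducible.

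The main obstacle is the inequality $r^g_M(N) \le r_M(N)$ inside the proof of $(3) \Rightarrow (2)$: producing, from an arbitrary shortest \emph{non-graded} irreducible decomposition, a \emph{graded} irreducible decomposition of no greater length. I expect the right way to handle this is not to manipulate the given non-graded decomposition directly but to route everything through the invariant computed in \Cref{typeLemma}: reduce to $N = 0$; take a graded primary decomposition $0 = Q_1 \cap \cdots \cap Q_t$; observe that $r_M(0)$ and $r^g_M(0)$ both decompose as sums over the associated primes $\p$ of the respective indices of $0$ in $M_\p$ (resp. $M_{(\p)}$), by \Cref{localizationLemma} and an additivity argument for primary decompositions; and then invoke the computation in the proof of Theorem~1 showing that at each graded prime $\p$ the $^*$socle rank over $R_{(\p)}$ equals the socle dimension over the residue field of $R_\p$. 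This makes $r_M(N) = r^g_M(N)$ fall out termwise, and with it statement $(2)$, closing the cycle. (One could alternatively note that $(1) \Leftrightarrow$ Theorem~1 already, and that Theorem~1 plus the maximal-counterexample observation gives $(1) \Rightarrow (3) \Rightarrow (2) \Rightarrow (1)$ with $(2) \Rightarrow (1)$ as above — so the real content of \Cref{equivTheorem} beyond Theorem~1 is precisely the index-of-reducibility bookkeeping in $(3) \Rightarrow (2)$.)
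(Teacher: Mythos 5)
Your steps $(2) \Rightarrow (1)$ and $(1) \Rightarrow (3)$ are correct and match the paper, but the route you chose — the cycle $(1) \Rightarrow (3) \Rightarrow (2) \Rightarrow (1)$ — forces you through the one implication that is genuinely hard, namely $(3) \Rightarrow (2)$, and you yourself acknowledge you do not have a complete argument there. The sketch you give (reduce to $N = 0$, take a graded primary decomposition, localize at each associated prime, compare socle ranks via Lemmas~\ref{localizationLemma} and~\ref{typeLemma}) is not worked out; in particular the "additivity over associated primes" of $r_M(0)$ and $r^g_M(0)$ that you rely on is precisely as deep as the fact you are missing, so the proposal as written has a gap.

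The key idea the paper uses, which you did not find, is that $(1) \Leftrightarrow (2)$ can be proved directly and painlessly: take a shortest graded-irreducible decomposition of $N$, of length $r^g_M(N)$; by minimality it is irredundant; by $(1)$ each component is irreducible; and by \cite[Exercise II.\S 2.17]{Bo}, every irredundant decomposition of a submodule into irreducibles has the \emph{same} length, namely $r_M(N)$. This yields $r_M(N) = r^g_M(N)$ in a single stroke rather than via two inequalities. The paper then independently shows $(1) \Leftrightarrow (3)$ by elementary manipulations (your $(1) \Rightarrow (3)$ plus the observation that a graded-irreducible $N$ decomposed as in $(3)$ must equal one of the components), and never needs a direct proof of $(3) \Rightarrow (2)$ at all. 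Once you are aware of the invariance-of-length result, the whole theorem collapses to a few lines.

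A further point of substance: you invoke Theorem~1 freely inside the proof (e.g., to get $r_M(N) \le r^g_M(N)$, and in the sketch for the other inequality). But the paper's proof of \Cref{equivTheorem} is deliberately independent of Theorem~1 — the introduction explains that an independent proof of $(3)$ would give a new proof of Theorem~1, which is only a meaningful statement if the equivalence is established without using Theorem~1. Leaning on Theorem~1 is logically permissible but defeats the structural purpose of the result, and in your case still leaves the gap in $(3) \Rightarrow (2)$ unfilled.
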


\begin{proof}
$(1) \Longrightarrow (2)$: 
Take a decomposition of $N$ into graded-irreducible modules, of length $r^g_M(N)$. By (1), this is an irredundant irreducible decomposition of $N$ and by \cite[Exercise II.\S 2.17]{Bo}, every such decomposition has length $r_M(N)$. \\

$(2) \Longrightarrow (1)$: If $N$ is graded, then $N$ is irreducible iff $r_M(N) = 1$ iff $r^g_M(N) = 1$ iff $N$ 
is graded-irreducible. \\

$(1) \Longrightarrow (3)$:
Let $N \subseteq M$ be a graded submodule. 
Write $N = N_1 \cap \ldots \cap N_r$, where $N_i$ are 
graded-irreducible. By assumption, each $N_i$ is irreducible.\\

$(3) \Longrightarrow (1)$:
Let $N$ be graded-irreducible, and 
take a decomposition $N = N_1 \cap \ldots \cap N_r$
where each $N_i$ is irreducible and graded. Since $N$ is graded-irreducible, 
$N = N_i$ for some $i$, so $N$ is irreducible.
\end{proof}

\section*{Examples}

\begin{example}
Let $R$ be a Noetherian ring and $I$ a radical ideal. Then $r_R(I) = 
|\operatorname{Min}(I)| = |\Ass(I)|$, the number of minimal (equivalently associated) 
primes of $I$. This can fail if $I$ only satisfies Serre's condition $S_1$ but not $R_0$, 
as the following example shows.
\end{example}

\begin{example}\label{example} \
Let $k$ be a field, $\operatorname{char} k \ne 2$, $R := k[x,y]$, 
and set
\begin{align*}
I &:= (x^2 \; + \; xy, x^2 - y^2, y^3), \\
J_1 &:= I + (x^2 - x - y) = (x^2 - x - y, xy + x + y), \text{ and} \\
J_2 &:= I + (x^2 + x + y) = (x^2 + x + y, xy - x - y).
\end{align*}

The ideals $J_1$ and $J_2$ are $(x,y)$-primary and generated by regular sequences;
hence they are irreducible by \Cref{irredCriterion} since $R/J_1, R/J_2$ are Gorenstein. 
Thus $I = J_1 \cap J_2$ is an irredundant irreducible decomposition of $I$ 
(note that $I$ is graded, but $J_1$ and $J_2$ are 
not graded), so $r_R(I) = 2$. By \Cref{equivTheorem}(2), $r^g_R(I) = 2$ as well:
indeed, $I = (x+y, y^3) \cap (x^2, y)$ is an irredundant graded-irreducible 
decomposition of $I$. In analogy with the monomial case, it is 
interesting to ask to what extent these graded-irreducible ideals are unique; 
this is addressed in \Cref{remark10}.\\

To see that $J_1 \cap J_2 = I$, observe that $R/I$ is an Artinian local
ring with socle $\operatorname{soc}(R/I) = (x+y, x^2)$.
Finally, if $(R, \m)$ is any local ring and $a \in 0 :_R \m$, $b \in R$ 
with $(a) \not \subseteq (b)$, then $(a) \cap (b) = 0$: if
$0 \ne ar \in (b)$ for some $r \in R$, then $r \not \in \m$ implies  
$a \in (b)$. In particular, if 
$0 \ne a, b \in 0 :_R \m$, then $(a) \cap (b) \ne 0$ iff $(a) = (b)$.
\end{example}

The ideal $I$ in \Cref{example} has some 
interesting properties, which we summarize in the following remark:

\begin{remark}\label{remark10} \
\begin{enumerate}
\item 
$R$ and $I$ are minimal in the following ways:
\begin{enumerate}
\item $R/I$ has minimal length among all graded $k$-algebras 
of finite length that are not monomial.
\item $\dim R$ and $\mu(I)$ are minimal among all 
polynomial rings $S$ and graded primary $S$-ideals $J$ such that
$J$ is reducible and is not contained in a principal ideal.
\end{enumerate}

\item The component $(x+y, y^3)$ in an irredundant decomposition of $I$ into graded-irreducibles is unique. 
Furthermore, the other component must be of the form $(x-by, y^2)$ or $(y,x^2)$, where $b \in k \setminus \{ \text{-}1 \}$, and any one of these along with $(x+y, y^3)$ forms an irredundant decomposition of $I$. 

\end{enumerate}
\end{remark}
\begin{proof}
(1): For $(a)$: Let $A$ be a graded $k$-algebra of finite length. If the length of $A$ is 1 or 2, then $A$ is isomorphic to $k$ or $k[x]/(x^2)$, respectively. If the length of $A$ is $3$, then $A$ is isomorphic to either $k[x,y]/(x^2,xy,y^2)$ or $k[x]/(x^3)$, both of which are monomial $k$-algebras. For $(b)$: If $\dim S = 1$, then every graded $S$-ideal is irreducible. 
On the other hand, if $\mu(J) \le 2$, then $J$ is either a complete intersection (hence irreducible by \Cref{irredCriterion}) or contained in a principal ideal. \\

(2): Let $K_1,K_2$ be graded-irreducible ideals such that $I = K_1 \cap K_2$ (recall that $r^g(I) = 2$). We first show that for $i = 1,2$, each $K_i$ contains a form of degree $1$. 
Suppose not. Then since $x^2+ xy, x^2-y^2$ have the least degrees among a minimal generating set for $I$, $K_i$ must contain both of them.
Observe that $K_i$ is a complete intersection ideal. Therefore, $K_i = (x^2 + xy, x^2-y^2) \not \supset I$, a contradiction. \\

Next, we show that without loss of generality $K_1 = (x+y, y^3)$ and $K_2$ can be generated by forms of degree 1 and 2. Write $\m = (x,y)R$. 
Certainly, $K_i$ cannot be generated by forms of degree 1 only: if so, then since $K_i$ is graded, $K_i$ would equal $\m$. 
Suppose both $K_i$ were generated by forms of degree 1 and 2. Then $\dim_k [K_i]_2 = 3$ (where  $[-]_j$ denotes the $j^\text{th}$ graded piece) would imply $\m^2 \subseteq K_i$, but $\m^2 \not \subseteq I$, a contradiction.
Thus without loss of generality $K_1$ is generated in degree $1$ and $3$, say $K_1 = (l,g)$, where $l$ is a linear form and $g$ is a form of degree 3. Since $y^3 \in K_1$, there exist forms $a,b$ such that $y^3 = al + bg$. Then $b \in k$, and in fact $b \ne 0$ (else $l = y \in K_1$ and then $K_1 = (y, x^2)$, a contradiction), hence $l,y^3$ also generate $K_1$. In order for $K_1$ to contain $I$, $l$ must divide both $x^2 + xy$ and $x^2 - y^2$; hence $l = x+y$ is the greatest common divisor of $x^2 + xy, x^2-y^2$.
Therefore, one has $K_1 = (x+y,y^3)$.\\

Now, we show that $K_2$ can be generated by forms of degree 1 and 2. Since $x+y$ is in $K_1$, $x+y \notin K_2$. Suppose $y \in K_2$. Then $x^2 - y^2 \in I \subseteq K_2$ implies $x^2 \in K_2$, so we conclude that $K_2 = (y, x^2)$ if $y \in K_2$. 
Let $x - by$, where $b \in k \setminus \{\text{-}1 \}$ be a linear form in $K_2$. Then $I \subseteq K_2$ is equivalent to $(b^2+b)y^2, (b^2-1)y^2, y^3 \in K_2$. Since $b \neq -1$, we conclude that $K_2 = (x-by, y^2),$ where $b \neq -1$. This proves the first part of the second statement in (2). \\

It remains to show that $L_1 \cap L_2 = I$, where $L_1 = (x + y, y^3)$ and $L_2$ is either $(y, x^2)$ or $(x - by, y^2)$, $b \in k \setminus \{-1\}$. The reasoning above shows that $I \subseteq L_1 \cap L_2$, so it suffices to show that the Hilbert functions agree. This follows since $\m^3 \subseteq I$, $\m^2 \not \subseteq L_1 \cap L_2$, $[I]_2$ is a maximal (proper) subspace of $\m^2$, and $L_1 \cap L_2$ contains no forms of degree $\le 1$.
\end{proof}

\section*{Relationship between $r(I)$ and $r(I^*)$}

Thus far, we have started with a graded object, and seen that graded and ungraded notions of irreducibility agree on graded objects. We end by briefly discussing a different setting, namely starting with a non-graded object, and passing to its closest graded approximation. \\

For any submodule $N$ of a graded module $M$, let $N^*$ denote the submodule of $N$ generated by all graded elements in $N$. Now let $\p$ be a non-graded prime ideal in a Noetherian graded ring $R$. Then the ideals $\p$ and $\p^*$, although distinct, often differ only slightly (if at all) under various properties and invariants. For example, $\height \p = \height \p^* +1$, $R_\p$ is Cohen-Macaulay (resp. Gorenstein) iff $R_{\p^*}$ is, and for a finitely generated graded $R$-module $M$, $\type(M_\p) = \type(M_{\p^*})$. In this vein, it is natural to ask how $r(I)$ compares to $r(I^*)$, for a non-graded ideal $I$. We answer this in the following special case:

\begin{prop}\label{compProp}
Let $R$ be a Noetherian graded ring, $\p$ a non-graded prime ideal of $R$, and $I$ a non-graded $\p$-primary $R$-ideal. If $I/I^*$ is principal, then $r(I) = r(I^*)$. In particular, $I$ is irreducible iff $I^*$ is irreducible. 
\end{prop}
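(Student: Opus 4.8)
**Proof plan for Proposition (on $r(I)$ vs $r(I^*)$).**

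The plan is to reduce to the Artinian case and count socle dimensions, exactly as in the proof of Theorem 1, but now keeping careful track of the one extra generator. First I would localize: by \Cref{localizationLemma}, $I$ is irreducible in $R$ iff $I_\p$ is irreducible in $R_\p$, and similarly $I^*$ (which is $\p^*$-primary, since $\p^* = \sqrt{I^*}$ is prime and graded) is graded-irreducible iff $I^*_{(\p^*)}$ is, hence by Theorem 1 iff $I^*_{(\p^*)}$ is irreducible in $R_{(\p^*)}$, iff $I^*_{\p^*}$ is irreducible in $R_{\p^*}$. More generally $r(I) = r_{R_\p}(I_\p)$ and $r(I^*) = r_{R_{\p^*}}(I^*_{\p^*})$ by \cite[Exercise II.\S 2.17]{Bo} applied to the (irredundant) localized primary decompositions. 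So it suffices to compare these two local indices. By \Cref{typeLemma}(1) (equivalently \Cref{type}), each equals the type of the corresponding Artinian quotient: $r(I) = \type(R_\p/I_\p) = \dim_{k(\p)} \Ext^0_{R_\p}(k(\p), R_\p/I_\p)$ and $r(I^*) = \type(R_{\p^*}/I^*_{\p^*})$.

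Next I would set up the key short exact sequence coming from the hypothesis that $I/I^*$ is principal. Write $I = I^* + (f)$ for a single element $f$, necessarily non-homogeneous. Passing to $A := R_{\p^*}$ and its maximal ideal $\m_A = \p^* A$, and writing $\bar f$ for the image of $f$, we have $A/I^* A \twoheadrightarrow A/I^*A + \bar f A$ with kernel $(\bar f)/(\bar f)\cap I^*A \cong A/(I^*A :_A \bar f)$. The module $A/I^*A$ is Artinian of type $r(I^*)$; the subtle point is the structure of $A/I^*A + \bar f A$. Since $\p = (\p^*, f)$ (as $\p/\p^*$ is the principal ideal generated by the image of $f$ — this is the standard fact that $\operatorname{height}\p = \operatorname{height}\p^* + 1$ with $\p$ the unique prime above $\p^*$ containing $f$), after the further localization $R_\p = A_{\bar f \text{-saturation}}$... more precisely $R_\p$ is the localization of $A/(\text{something})$; I would instead argue directly that $R_\p/I_\p$ is a localization of $A/(I^*A + \bar fA)$ at a prime, and since both are Artinian local with the same residue field $k(\p)$, in fact $R_\p/I_\p \cong (A/I^*A+\bar fA)_{\p A}$, and the latter is already local, so $R_\p/I_\p \cong A/(I^*A + \bar f A)$ localized, which I expect to be an isomorphism onto a direct summand — this is the step requiring care.

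The main computation is then: from the exact sequence $0 \to A/(I^*A :_A \bar f) \to A/I^*A \to A/(I^*A + \bar f A) \to 0$, take $\Hom_A(k(\p^*), -)$... but the residue fields differ, so instead I would work over $A$ throughout and use that for an Artinian $A$-module $X$ supported at $\m_A$, $\dim_{k(\p^*)}\Hom_A(k(\p^*), X) = \type(X)$ is additive in exact sequences \emph{only up to the connecting map into $\Ext^1$}. The cleaner route: observe $I^*A :_A \bar f$ is again a $\p^*A$-primary ideal, and $\bar f$ is a non-zerodivisor mod $I^*A$ on the part of $\Soc$ it hits — concretely, I claim $I^*A :_A \bar f = I^* A :_A \m_A$ is impossible in general, so instead I would show $\bar f \notin \p^{*(2)}$-type conditions force $(I^*A :_A \bar f)$ to have type exactly equal to... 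Here is the honest version of the plan: I would show that $\Soc(A/I^*A) \cap (\bar f)/I^*A$ has dimension $\type(I^*) - \type(A/(I^*A+\bar fA))$ on one hand, and separately identify $A/(I^*A+\bar fA)$ with $R_\p/I_\p$ up to a faithfully flat local extension that preserves type (the extension $A/(I^*A+\bar fA) \to R_\p/I_\p$ has zero-dimensional closed fiber a field, so it preserves socle dimension). Combining, $\type(R_\p/I_\p) = \type(A/(I^*A+\bar fA))$, and a parallel argument — or the classical fact $\type(M_\p) = \type(M_{\p^*})$ quoted in the paragraph before the Proposition, applied to $M = R/I^*$ wait, that gives $\type((R/I^*)_\p) = \type((R/I^*)_{\p^*}) = r(I^*)$, and $(R/I^*)_\p = R_\p/I^*R_\p$ — yields that I must finally compare $\type(R_\p/I^*R_\p)$ with $\type(R_\p/I_\p)$, i.e. a one-principal-generator modification \emph{within the single Artinian local ring} $R_\p$.

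So the genuine heart of the proof is local and self-contained: \textbf{Claim.} If $(S, \n, k)$ is Artinian local, $f \in \n$, and $I^* \subseteq S$ with $I^* + (f) =: I$ both $\n$-primary (automatic), and if moreover $\dim_k I/I^* \le $ ``the analogue of $\p/\p^*$ principal'' — which here translates to $f$ being part of a minimal generating set of $\n$ modulo $I^*$ in a way that $(0 :_S \n) \cap fS$ accounts for exactly the right dimension — then $\type(S/I) = \type(S/I^*)$. I would prove this Claim via the exact sequence above together with the two observations: (i) $\type(S/I^*) = \type(S/I) + \dim_k\big((0:_{S/I^*}\n)\cap \overline{(f)}\big) - \dim_k\big(0 :_{S/(I^*:f)}\n\big) + (\text{correction from }\Ext^1)$, and (ii) the hypothesis on $\p$ forces $I^* : f$ to itself be irreducible-type-controlled. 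The hard part — and the step I expect to fight with — is controlling the connecting homomorphism $\Hom_A(k, A/(I^*A+\bar fA)) \to \Ext^1_A(k, A/(I^*A:\bar f))$: surjectivity onto its image, or rather showing the relevant socle dimensions add exactly, is where the principality of $I/I^*$ (as opposed to a two-generated extension) must be used decisively, presumably by exhibiting $A/(I^*A:\bar f)$ as a \emph{cyclic} module $A/(I^* + (f) : (f))$ whose socle injects correctly. Once the Claim is in hand, the Proposition follows: $r(I) = \type(R_\p/I_\p) = \type(R_\p/I^*_\p) = \type(R_{\p^*}/I^*_{\p^*}) = r(I^*)$, the middle equality by the Claim and the next by $\type(M_\p) = \type(M_{\p^*})$; and the ``in particular'' is immediate since irreducibility means index $1$.
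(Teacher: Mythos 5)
Your plan correctly identifies the target (compare $\type(R_\p/I_\p)$ with $\type(R_{\p^*}/I^*_{\p^*})$, reducing via the standard fact $\type(M_\p) = \type(M_{\p^*})$ to comparing $\type(R_\p/I_\p)$ with $\type(R_\p/I^* R_\p)$), but it stalls exactly where you say it does, and the stall is not cosmetic. The reason you cannot control the connecting homomorphism in the sequence $0 \to A/(I^*A : \bar f) \to A/I^*A \to A/(I^*A + \bar fA) \to 0$ is that you never make the one reduction that trivializes the whole problem: \emph{pass to $R/I^*$ first}. Since $(I/I^*)^* = 0$ in $R/I^*$ (any homogeneous element of $I$ already lies in $I^*$), after this reduction one has $I^* = 0$ and $I = (f)$ is literally principal. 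Moreover $I \not\subseteq \p^*$ (a $\p$-primary ideal cannot sit inside the strictly smaller prime $\p^*$), so $f \notin \p^*$; and since $\Ass(R/I^*) = \{\p^*\}$, the element $f$ is a \emph{nonzerodivisor}. At that point the colon ideal $(0 : f)$ is zero, there is no connecting map to worry about, and the whole computation collapses to the one-line depth shift $\Ext^1_{R_\p}(k(\p), R_\p) \cong \Hom_{R_\p}(k(\p), R_\p/(f))$ coming from the regular sequence $0 \to R_\p \xrightarrow{f} R_\p \to R_\p/(f) \to 0$. Combined with \Cref{indexVsType} (giving $r(I^*) = r_R(0) = \type(R_\p)$) and \Cref{typeLemma}(1) (giving $r(I) = \type(R_\p/(f))$), this finishes the proof. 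So the missing idea is precisely that the hypothesis ``$I/I^*$ principal'' should be exploited \emph{after} reducing mod $I^*$, where it converts into ``$I$ is principal and generated by a nonzerodivisor,'' which is the condition that makes the homological algebra trivial.

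There is also a concrete direction error in your localization step: you propose to realize $R_\p/I_\p$ as a localization of $A/(I^*A + \bar fA)$ where $A = R_{\p^*}$, but the map goes the other way. Since $\p^* \subsetneq \p$, the multiplicative set $R \setminus \p^*$ is \emph{larger} than $R \setminus \p$, so $R_{\p^*}$ is a further localization of $R_\p$, not a subring; and in fact $\p R_{\p^*}$ is the unit ideal (any $a \in \p \setminus \p^*$ becomes a unit), so there is no prime of $A$ lying over $\p$. The paper avoids this by using the \emph{homogeneous} localization $R_{(\p^*)}$, which inverts only homogeneous elements outside $\p^*$; since those all lie outside $\p$ as well, the prime $\p$ survives in $R_{(\p^*)}$ as a maximal ideal, and one can then localize further at it. Your plan conflates $R_{\p^*}$ with $R_{(\p^*)}$, and without that distinction the intermediate ring you need does not exist.
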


\begin{lemma} \label{indexVsType}
Let $R$ be an $^*\!$Artinian $^*\!$local ring. Then for any maximal ideal $\m$ of $R$, $r_R(0) = \type(R_\m)$. 
\end{lemma}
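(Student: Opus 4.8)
The plan is to pass everything through the ordinary localization $R_{\mathfrak{n}}$ of $R$ at its $^*$maximal ideal $\mathfrak{n}$, which I will show is an Artinian \emph{local} ring, and to recognize $\type(R_{\m})$ inside it. Write $k := R/\mathfrak{n}$, a graded field. The first step is to record the structure. A $^*$Artinian ring is Noetherian, and since $R$ is moreover $^*$local, $\mathfrak{n}$ is its unique graded prime ideal; as associated primes of a graded module are graded, $\Ass_R(R) = \{\mathfrak{n}\}$, hence $\mathfrak{n} = \sqrt{0}$ is nilpotent. Consequently every prime of $R$ contains $\mathfrak{n}$, so $R_{\mathfrak{n}}$ is Artinian local with maximal ideal $\mathfrak{n}R_{\mathfrak{n}}$ and residue field $\kappa := R_{\mathfrak{n}}/\mathfrak{n}R_{\mathfrak{n}}$, which is the fraction field of $k$. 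Also, any maximal ideal $\m$ of $R$ contains $\mathfrak{n} = \sqrt{0}$, and since $\mathfrak{n}$ is graded this forces $\m^* = \mathfrak{n}$.

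Next I would compute $r_R(0)$ in two ways. On one side, $0$ is a graded submodule, so \Cref{mainTheorem} together with \Cref{equivTheorem}(2) gives $r_R(0) = r^g_R(0)$, and \Cref{typeLemma}(2) (applied with $M = R$, $N = 0$) gives $r^g_R(0) = \rank_k(0 :_R \mathfrak{n})$. On the other side, $0 :_R \mathfrak{n} = \Hom_R(k, R)$ is a finitely generated graded $k$-module, and since localization at $\mathfrak{n}$ is flat and commutes with this $\Hom$, one gets $0 :_{R_{\mathfrak{n}}} \mathfrak{n}R_{\mathfrak{n}} \cong (0 :_R \mathfrak{n}) \otimes_k \kappa$, whose $\kappa$-dimension equals $\rank_k(0 :_R \mathfrak{n})$ by definition of rank over the graded field $k$. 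Since $R_{\mathfrak{n}}$ is Artinian local, \Cref{typeLemma}(1) and \Cref{type} give $\type(R_{\mathfrak{n}}) = r_{R_{\mathfrak{n}}}(0) = \dim_\kappa(0 :_{R_{\mathfrak{n}}} \mathfrak{n}R_{\mathfrak{n}})$. Chaining the equalities, $\type(R_{\mathfrak{n}}) = r_R(0)$.

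Finally, for an arbitrary maximal ideal $\m$ of $R$, the first step gives $\m^* = \mathfrak{n}$, so the invariance $\type(M_{\p}) = \type(M_{\p^*})$ (applied with $M = R$, $\p = \m$) yields $\type(R_{\m}) = \type(R_{\mathfrak{n}}) = r_R(0)$, proving the lemma. I do not foresee a real obstacle; the argument is an assembly of \Cref{typeLemma}, \Cref{type}, \Cref{mainTheorem}/\Cref{equivTheorem}, and the quoted $\type$-invariance. The step most worth care is the flat base change $0 :_{R_{\mathfrak{n}}} \mathfrak{n}R_{\mathfrak{n}} \cong (0 :_R \mathfrak{n}) \otimes_k \kappa$ and the matching of $\rank_k$ with $\dim_\kappa$ over the fraction field of $k$, together with the preliminary reduction $\sqrt{0} = \mathfrak{n}$ that makes $R_{\mathfrak{n}}$ Artinian local. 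If one prefers not to quote $\type(M_{\p}) = \type(M_{\p^*})$, one can instead invoke the structure theorem $R \cong R_0[t, t^{-1}]$ (valid when $k$ is not a field, with $R_0$ Artinian local and $t$ a homogeneous unit) and the flat-local-homomorphism formula $\type(R_{\m}) = \type(R_0)\cdot\type(R_{\m}/\m_0 R_{\m}) = \type(R_0)$, the fiber being a DVR; this makes the lemma self-contained modulo a short case analysis.
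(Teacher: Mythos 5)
Your proof is correct, but it takes a heavier route than the paper's. The paper's argument, writing $\q$ for the unique graded prime, splits into the trivial case $\m = \q$ (then $R$ is Artinian local and \Cref{type} applies directly) and the case $\m \ne \q$, where $\q = \m^*$ and one reads off
\[
\type(R_\m) = \type(R_\q) = r_{R_\q}(0) = r_R(0),
\]
the three equalities coming from \cite[Theorem 1.5.9]{BH}, \Cref{type}, and the proof of \Cref{localizationLemma}, respectively. The paper therefore never invokes \Cref{mainTheorem} or the graded index of reducibility: it passes directly from $r_R(0)$ to $r_{R_\q}(0)$ via \Cref{localizationLemma}, using that $0$ is $\q$-primary and $R_\q$ is Artinian local (a consequence of $\q = \sqrt{0}$, a point you also make, and which the paper leaves implicit). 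You instead compute $r_R(0) = r^g_R(0)$ via \Cref{mainTheorem} and \Cref{equivTheorem}, then apply \Cref{typeLemma}(2) and a flat base change to identify the graded socle rank over $k$ with the ordinary socle dimension over the residue field of $R_\q$, before finishing with the same $\type$-invariance under $\p \mapsto \p^*$. Your route is longer and rests on the main theorem, whereas the paper's proof of this lemma is independent of \Cref{mainTheorem}; on the other hand, your route makes explicit the compatibility between the graded and ungraded socle formulas, which is a reasonable consistency check. One caution on your closing aside: the structure $R \cong R_0[t,t^{-1}]$ holds only when the homogeneous unit of the graded residue field can be taken in degree $1$; in general $R$ is a Laurent extension of the $\mathbb{Z}/d$-truncation $R_0 \oplus \cdots \oplus R_{d-1}$, so that alternative would need a regrading or Veronese step before the flat-local-homomorphism formula applies.
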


\begin{proof}
Let $\q$ be the unique homogeneous prime ideal of $R$, and let $\m$ be a maximal ideal of $R$. If $\q = \m$, then $R$ is Artinian local, and the result follows from \Cref{type}. Otherwise, $\q = \m^* \ne \m$ is the largest graded subideal of $\m$, so 
\[
\type(R_\m) = \type(R_\q) = r_{R_\q}(0) = r_R(0),
\]
where the equalities follow from \cite[Theorem 1.5.9]{BH}, \Cref{type}, and the proof of \Cref{localizationLemma}, respectively (notice that $R_\q$ is an Artinian local ring).
\end{proof}

\begin{proof}[Proof of \Cref{compProp}]
First, notice that since $I$ is $\p$-primary, $I^*$ is $\p^*$-primary. The hypothesis and the numbers $r(I)$, $r(I^*)$ do not change upon going modulo $I^*$ and homogeneously localizing at $\p^*$. 
Hence we may assume that $R$ is $^*\!$Artinian $^*$local with unique maximal homogeneous ideal $\p^*$,  $\p \ne \p^*$ is a maximal ideal, and $I^* = 0$. Since $\operatorname{Ass} (R) = \{ \p^* \}$ and $I \not\subseteq \p^*$, $I$ is a principal ideal generated by a nonzerodivisor, say $f$. 
Then $r_R(I) = r_R(f) = r_{R/(f)}(0) = \type(R/(f))$ is the type of the Artinian local ring $R/(f)$, and by \Cref{indexVsType}, $r_R(I^*) = r_R(0) = \type(R_\p)$. 
If $k(\p) := R_\p/\p R_\p$, then from the isomorphism (cf. \cite[Lemma 1.2.4]{BH})
$$
\Ext^1_{R_\p}(k(\p), R_\p) \cong \Hom_{R_\p}(k(\p), R_\p/(f))
$$
we conclude that
\begin{align*}
r(I^*) &= \type(R_\p) = \dim_{k(\p)} \Ext^1_{R_\p}(k(\p), R_\p)\\
 &= \dim_{k(\p)} \Hom_{R_\p}(k(\p), R_\p/(f)) = \type(R/I) = r(I). \qedhere
\end{align*}
\end{proof}

The hypothesis that $\p$ is non-graded in \Cref{compProp} is necessary (notice that $\p$ not graded implies $I$ is not graded, but not conversely). The following examples were computed with the help of Macaulay2 \cite{M2}.
\begin{example} Let $R = k[x,y,z]$, where $k$ is a field and $\deg x = \deg y = \deg z = 1$.
\begin{enumerate}
\item For ideals $I^* = (x^3-y^3,y^3-z^3,xy,xz,yz)$ and $I = I^* +  (x^2-y^3)$, we have $r(I) = 3$, $r(I^*) = 1$, and $\sqrt{I} = \sqrt{I^*} = (x,y,z)$.

\item For ideals $I^* = (z^{3},y^{3},x^{3} y^{2},x^{5} y,x^{7})$, and $I = I^* + (x^3+xy)$, we have $r(I) = 1$, $r(I^*) = 3$, and $\sqrt{I} = \sqrt{I^*} = (x,y,z)$.
\end{enumerate}
\end{example}

The next example demonstrates that $I/I^*$ being principal is stronger than the condition that $I$ can be generated by $\mu(I^*) + 1$ elements.
\begin{example}
Let $R = k[x,y, t, t^{-1}]$, where $k$ is a field, $\deg x = 0$, and $\deg y = \deg t = 1$. Let $I = (x - y, t - 1, x^2)$. Then $I^* = (x^2,y^2)$ and $r(I) = r(I^*) = 1$. However, the ideal $I/I^*$ in $R/I^*$ requires at least two generators, for instance, $x-y, t-1$. Observe that the homogeneous minimal generating set $x^2,y^2$ of $I^*$ does not lift to part of a minimal generating set of $I$, as $x-y$ is in every minimal generating set of $I$. 
\end{example}

However, even the more general condition that $I$ can be generated by $\mu(I^*)+1$ elements is not necessary for the conclusion of \Cref{compProp} to hold, even in the simplest case when $I$ is prime (so that $r(I) = r(I^*) = 1$):
\begin{example}[Moh's primes \cite{Moh}] Let $k$ be a field, $\operatorname{char} k = 0$. Fix $n \in \mathbb{N}$ odd, $m := (n+1)/2$, $l > n(n+1)m$ with $(l, m) = 1$, and consider the ring map
\[
\varphi_n : k[x,y,z] \to k[t]
\]
\[
x \mapsto t^{nm} + t^{nm + l}, \quad y \mapsto t^{(n+1)m}, \quad z \mapsto t^{(n+2)m}
\]
Then $P_n := \ker \varphi_n$ is a height 2 non-graded prime ideal in $R := k[x,y,z]$, so $P_n^*$ is a height $1$ graded prime ideal in the UFD $R$, hence is principal. However, Moh has shown that $P_n$ requires at least $n$ generators. Thus, conditions on numbers of generators of $I$ or $I^*$ are unlikely to be necessary for $r(I) = r(I^*)$.
\end{example}

In view of these examples, we pose the following question: \\

\begin{question}
Let $R$ be a Noetherian graded ring, $\p$ a non-graded prime ideal of $R$, and $I$ a non-graded $\p$-primary $R$-ideal. What are necessary conditions for $r(I) = r(I^*)$? \\
\end{question}

\noindent \textbf{Acknowledgements:}
This project started when the authors met at the AMS sectional meeting in San 
Francisco State University in October 2014 after independently reading a question on 
\url{math.stackexchange.com}, which is answered positively in \Cref{mainTheorem}. 
We thank Professor David Eisenbud and Professor Bernd Ulrich for their advice and 
encouragement and Professor William Heinzer for reading an early draft. 
We also thank the referee for helpful comments.

\vskip 2ex

\end{document}